\theoremstyle{plain} 
\newtheorem{theorem}{\indent\sc Theorem}[section]
\newtheorem{corollary}[theorem]{\indent\sc Corollary}
\newtheorem{proposition}[theorem]{\indent\sc Proposition}
\theoremstyle{definition} 
\newtheorem{definition}[theorem]{\indent\sc Definition}
\newtheorem{remark}[theorem]{\indent\sc Remark}
\newtheorem{example}[theorem]{\indent\sc Example}
\title{A note on warped product\\
almost quasi-Yamabe solitons} 
\author{Adara M. Blaga}
\date{} 
\begin{document}

\maketitle

\footnote{ 
2010 \textit{Mathematics Subject Classification}.
53C21, 53C44.
}
\footnote{ 
\textit{Key words and phrases}.
almost quasi-Yamabe solitons.
}

\begin{abstract}
We consider almost quasi-Yamabe solitons in Riemannian manifolds, derive a Bochner-type formula in the gradient case and prove that under certain assumptions, the manifold is of constant scalar curvature. We also provide necessary and sufficient conditions for a gradient almost quasi-Yamabe soliton on the base manifold to induce a gradient almost quasi-Yamabe soliton on the warped product manifold.
\end{abstract}

\section{Introduction}

The notion of \textit{Yamabe solitons}, which generate self-similar solutions to Yamabe flow \cite{ham}:
\begin{equation}
\frac{\partial }{\partial t}g(t)=-scal(t) \cdot g(t),
\end{equation}
firstly appeared to L. F. di Cerbo and M. N. Disconzi in \cite{c}. In \cite{chen}, B.-Y. Chen introduced the notion of \textit{quasi-Yamabe soliton} which we shall consider in the present paper for a more general case, when the constants are let to be functions.

Let $(M,g)$ be an $n$-dimensional Riemannian manifold ($n>2$), $\xi$ a vector field and $\eta$ a $1$-form on $M$.
\begin{definition}
\textit{An almost quasi-Yamabe soliton} on $M$ is a data $(g,\xi,\lambda,\mu)$ which satisfy the equation:
\begin{equation}\label{e8}
\frac{1}{2}\mathcal{L}_{\xi}g+(\lambda-scal) g+\mu\eta\otimes \eta=0,
\end{equation}
where $\mathcal{L}_{\xi}$ is the Lie derivative operator along the vector field $\xi$ and $\lambda$ and $\mu$ are smooth functions on $M$.
\end{definition}

When the potential vector field of (\ref{e8}) is of gradient type, i.e. $\xi=grad(f)$, then $(g,\xi,\lambda,\mu)$ is said to be a \textit{gradient almost quasi-Yamabe soliton} (or a generalized quasi-Yamabe gradient soliton) \cite{cc} and the equation satisfied by it becomes:
\begin{equation}\label{e22}
Hess(f)+(\lambda-scal) g+\mu df\otimes df=0.
\end{equation}

\bigskip

In the next section, we shall derive a Bochner-type formula for the gradient almost quasi-Yamabe soliton case and
prove that under certain assumptions, the manifold is of constant scalar curvature. In the last section we construct an almost quasi-Yamabe soliton on a warped product manifold. Remark that results on warped product gradient Yamabe solitons for certain types of warping functions $f$ have been obtained by W. I. Tokura, L. R. Adriano and R. S. Pina in \cite{t}.

\section{Gradient almost quasi-Yamabe solitons}

Remark that in the gradient case, from (\ref{e22}) we get:
\begin{equation}\label{p}
\nabla\xi=-(\lambda-scal)I-\mu df\otimes \xi.
\end{equation}

Therefore, $\nabla_{\xi}\xi=[\Delta(f)+(n-1)(\lambda-scal)]\xi$, i.e. $\xi$ is a
\textit{generalized geodesic vector field} with the potential function $\Delta(f)+(n-1)(\lambda-scal)$ \cite{chen2}.

Also, if $(\lambda,\mu)=(scal-1,1)$, then $\xi$ is \textit{torse-forming} and if $\mu=0$, then $\xi$ is \textit{concircular}.

\bigskip

Now we shall get a condition that $\mu$ should satisfy in a gradient almost quasi-Yamabe soliton $(g,\xi,\lambda,\mu)$. Taking the scalar product with $Hess(f)$, from (\ref{e22}) we get:
$$|Hess(f)|^2+(\lambda-scal)\Delta(f)+\frac{\mu}{2}\xi(|\xi|^2)=0$$
and tracing (\ref{e22}) we obtain:
$$\Delta(f)+n(\lambda-scal)+\mu |\xi|^2=0.$$

From the above relations we deduce the equation:
$$n\lambda^2+(2n\cdot scal+\mu |\xi|^2)\lambda+n\cdot scal^2+\mu |\xi|^2\cdot scal-\frac{\mu}{2}\xi(|\xi|^2)-|Hess(f)|^2=0 $$
which has solution (in $\lambda$) if and only if $$\mu^2|\xi|^4+2n\mu\xi(|\xi|^2)+4n|Hess(f)|^2\geq 0$$
(that is always true for $\xi$ of constant length).

\bigskip

The next step is to deduce a Bochner-type formula for the gradient almost quasi-Yamabe soliton case.
\begin{theorem}\label{t}
If (\ref{e22}) defines a gradient almost quasi-Yamabe soliton on the $n$-dimen\-sio\-nal Riemannian manifold $(M,g)$ and
$\eta=df$ is the $g$-dual of the gradient vector field $\xi:=grad(f)$, then:
\begin{equation}\label{e53}
\frac{1}{2}\Delta(|\xi|^2)=|\nabla \xi|^2-\frac{1}{n-1}S(\xi,\xi)-\frac{n-2}{2(n-1)}\mu \nabla_{\xi}(|\xi|^2)-\end{equation}$$-|\xi|^2[\xi(\mu)-\frac{n}{n-1}\mu^2|\xi|^2-\frac{n^2}{n-1}\lambda \mu+\frac{n^2}{n-1}\mu \cdot scal].
$$
\end{theorem}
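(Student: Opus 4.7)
The plan is to combine the classical Bochner formula with the structural identity (\ref{p}), which expresses $\nabla\xi$ linearly in terms of the identity operator and $df\otimes\xi$. Since $\xi=grad(f)$, Bochner's formula gives
$$\frac{1}{2}\Delta(|\xi|^2)=|\nabla\xi|^2+g(\xi,\nabla\Delta(f))+S(\xi,\xi),$$
and the strategy is to evaluate each of the three right-hand-side terms using (\ref{p}) together with the trace of (\ref{e22}), then reorganise so that the final expression has the form stated in the theorem.

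First I would work in an orthonormal frame $\{e_i\}$ and use (\ref{p}) to write $\nabla_{e_i}\xi=-(\lambda-scal)e_i-\mu g(\xi,e_i)\xi$, yielding a closed polynomial expression for $|\nabla\xi|^2$ in $\lambda-scal$, $\mu$ and $|\xi|^2$. The same identity gives $\nabla_{\xi}(|\xi|^2)=2g(\nabla_{\xi}\xi,\xi)=-2[(\lambda-scal)+\mu|\xi|^2]|\xi|^2$. Tracing (\ref{e22}) produces $\Delta(f)=-n(\lambda-scal)-\mu|\xi|^2$, from which $\xi(\Delta(f))$ unfolds into a combination of $\xi(\lambda-scal)$, $\xi(\mu)|\xi|^2$ and $\mu\cdot\nabla_{\xi}(|\xi|^2)$.

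The key step, and the one I expect to be trickiest, is to convert the surviving $\xi(\lambda-scal)$ term into the Ricci contribution $-\frac{1}{n-1}S(\xi,\xi)$ demanded by the statement. For this I would differentiate (\ref{p}) and invoke the Ricci identity $R(X,Y)\xi=\nabla_X\nabla_Y\xi-\nabla_Y\nabla_X\xi-\nabla_{[X,Y]}\xi$; the symmetry of $Hess(f)$ forces several potentially-messy pieces to cancel pairwise. Setting $Y=\xi$ and tracing over $X$ in the orthonormal frame should leave exactly
$$S(\xi,\xi)=(n-1)\xi(\lambda-scal)+(n-1)\mu(\lambda-scal)|\xi|^2,$$
which I would then solve for $\xi(\lambda-scal)$ and substitute back.

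Inserting the resulting expression into the Bochner expansion and re-packaging the $(\lambda-scal)\mu|\xi|^2$ contributions via $\nabla_{\xi}(|\xi|^2)$ should deliver the claimed identity. The remaining difficulty is purely bookkeeping: hitting the precise coefficients $\frac{n-2}{2(n-1)}$, $\frac{n}{n-1}$ and $\frac{n^2}{n-1}$ requires a careful tally of how the $\mu(\lambda-scal)|\xi|^2$, $\mu^2|\xi|^4$ and $\xi(\mu)|\xi|^2$ contributions assemble, but no further geometric input is needed beyond (\ref{p}), the trace of (\ref{e22}) and the Ricci identity.
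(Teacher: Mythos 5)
Your proposal is correct, and I verified that each of its ingredients holds: the Bochner identity, the relation $\nabla_{\xi}(|\xi|^2)=-2[(\lambda-scal)+\mu|\xi|^2]|\xi|^2$ coming from (\ref{p}), the traced equation $\Delta(f)=-n(\lambda-scal)-\mu|\xi|^2$, and the key curvature identity $S(\xi,\xi)=(n-1)\xi(\lambda-scal)+(n-1)\mu(\lambda-scal)|\xi|^2$ (which one checks directly from the Ricci identity applied to (\ref{p}), and which reduces to the paper's formula (\ref{j}) when $\lambda,\mu$ are constant and $|\xi|=1$). Substituting these into the Bochner expansion and splitting $\mu\nabla_{\xi}(|\xi|^2)$ appropriately does reproduce (\ref{e53}) with the stated coefficients. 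The backbone is the same as the paper's: the paper's two auxiliary identities $div(Hess(f))=d(\Delta(f))+i_{Q\xi}g$ and $(div(Hess(f)))(\xi)=\frac{1}{2}\Delta(|\xi|^2)-|\nabla\xi|^2$ combine into exactly the Bochner formula you start from. Where you genuinely diverge is in how the term $\xi(\lambda-scal)$ is converted into a Ricci contribution: the paper obtains a second relation by taking the divergence of the full tensor equation (\ref{e22}) and subtracting the differentiated trace, then eliminates between the two; you instead compute $R(\cdot,\cdot)\xi$ from the connection form (\ref{p}) and trace. Your route has the advantage of producing the reusable identity for $S(\xi,\xi)$ (essentially a non-constant-coefficient version of the paper's (\ref{nn})--(\ref{j})), at the cost of a somewhat longer curvature computation; the paper's route stays entirely at the level of divergences of the soliton equation. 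Two small remarks: the closed polynomial expression for $|\nabla\xi|^2$ you propose to derive in the orthonormal frame is never needed, since $|\nabla\xi|^2$ survives unreduced in (\ref{e53}); and the final ``bookkeeping'' amounts precisely to writing $\mu\nabla_{\xi}(|\xi|^2)=\frac{n-2}{2(n-1)}\mu\nabla_{\xi}(|\xi|^2)+\frac{n}{2(n-1)}\mu\nabla_{\xi}(|\xi|^2)$ and converting only the second summand via the relation for $\nabla_{\xi}(|\xi|^2)$, which is worth saying explicitly.
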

\begin{proof}
First remark that:
$$
trace(\mu \eta\otimes \eta)=\mu|\xi|^2
$$
and
$$div(\mu \eta\otimes \eta)=\frac{\mu}{2}d(|\xi|^2)+\mu \Delta(f) df+d\mu(\xi)df.$$

Taking the trace of the equation (\ref{e22}), we obtain:
\begin{equation}\label{e13}
\Delta(f)+n(\lambda-scal) +\mu |\xi|^2=0
\end{equation}
and differentiating it:
\begin{equation}\label{e14}
d(\Delta(f))+nd\lambda-nd(scal)+\mu d(|\xi|^2)+|\xi|^2d\mu=0.
\end{equation}

Now taking the divergence of the same equation, we get:
\begin{equation}\label{e15}
div(Hess(f))+d\lambda-d(scal)+\frac{\mu}{2}d(|\xi|^2)+\mu \Delta(f) df+d\mu(\xi)df=0.
\end{equation}

Substracting the relations (\ref{e15}) and (\ref{e14}) computed in $\xi$ and using \cite{bla}:
$$div(Hess(f))=d(\Delta(f))+i_{Q\xi}g,$$
$$(div(Hess(f)))(\xi)=\frac{1}{2}\Delta(|\xi|^2)-|\nabla \xi|^2,$$
we obtain (\ref{e53}).
\end{proof}

\begin{remark}
For the case $\mu=0$, under the assumptions $S(\xi,\xi)\leq (n-1)|\nabla \xi|^2$ we get $\Delta(|\xi|^2)\geq 0$ and from the maximum principle follows that $|\xi|^2$ is constant in a neighborhood of any local maximum. If $|\xi|$ achieve its maximum, then $S(\xi,\xi)= (n-1)|\nabla \xi|^2$.
\end{remark}

Let us make some remarks on the scalar curvature of $M$.

From (\ref{p}) we get:
$$R(\cdot,\cdot)\xi=-[d(\lambda-scal)\otimes I-I\otimes d(\lambda-scal)]-\mu(\lambda-scal)(df\otimes I-I\otimes df)-(d\mu \otimes df-df\otimes d\mu)$$
and
\begin{equation}\label{pp}
R(\cdot,\xi)\cdot=d(\lambda-scal)\otimes I-g\otimes[grad(\lambda-scal)-\mu (\lambda-scal)\xi]+\mu(\lambda-scal)df\otimes I+
\end{equation}
$$+d\mu\otimes df\otimes \xi
-df\otimes df\otimes grad(\mu)$$
which for $\lambda$ and $\mu$ constant become:
$$R(\cdot,\cdot)\xi=[d(scal)\otimes I-I\otimes d(scal)]-\mu(\lambda-scal)(df\otimes I-I\otimes df)$$
and
\begin{equation}\label{pg}
R(\cdot,\xi)\cdot=-d(scal)\otimes I+g\otimes[grad(scal)+\mu (\lambda-scal)\xi]+\mu(\lambda-scal)df\otimes I.
\end{equation}

Using (\ref{pp}), $R(\xi,\xi)X=0$ implies:
$$[d(\lambda-scal)+|\xi|^2d\mu]\otimes \xi=df \otimes [grad(\lambda-scal)+|\xi|^2 grad(\mu)]$$
which for $\lambda$ and $\mu$ constant becomes:
$$d(scal)\otimes \xi=df \otimes grad(scal).$$

Assume further that $\lambda$ and $\mu$ are constant.
Computing the previous relation in $\xi$ and choosing an open subset where $\xi\neq 0$, we deduce:
\begin{equation}\label{po}
grad(scal)=\frac{\xi(scal)}{|\xi|^2}\xi.
\end{equation}

Denoting by $h=:\frac{\displaystyle \xi(scal)}{\displaystyle |\xi|^2}$, from the symmetry of $Hess(scal)$ we obtain:
$$dh \otimes df=df\otimes dh$$
which implies:
$$|\xi|^2dh=\xi(h) df \ \ \textit{and} \ \ |\xi|^2 grad(h)=\xi(h)\xi.$$

A similar result like the one obtained by B.-Y. Chen, S. Deshmukh in \cite{chen2} for Yamabe solitons can be obtained for quasi-Yamabe solitons, following the same ideas in proving it.
\begin{theorem}
Let (\ref{e22}) define a gradient quasi-Yamabe soliton on the connected $n$-dimen\-sio\-nal Riemannian manifold $(M,g)$ ($n>1$) for
$\eta=df$ the $g$-dual of the unitary vector field $\xi:=grad(f)$. If $\xi(scal)$ is constant along the integral curves of $\xi$ and $Hess(scal)$ is degenerate in the direction of $\xi$, then $M$ is of constant scalar curvature.
\end{theorem}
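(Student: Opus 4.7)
The plan is to combine the earlier identity (\ref{po}) for $grad(scal)$ with a direct computation of $Hess(scal)$ along $\xi$, and then close by a simple connectedness argument. Since $\xi$ is unitary, (\ref{po}) reduces to $grad(scal) = h\,\xi$ with $h := \xi(scal)$, so the goal becomes to show $h \equiv 0$ on $M$.

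Next I would compute $Hess(scal)(X, Y) = g(\nabla_X grad(scal), Y)$ by substituting $grad(scal) = h\xi$ and applying the product rule together with the formula (\ref{p}) for $\nabla\xi$ (which is available here because in the quasi-Yamabe case $\lambda$ and $\mu$ are constants). A direct expansion yields
$$Hess(scal)(X, Y) = X(h)\,g(\xi, Y) - h(\lambda - scal)\,g(X, Y) - h\mu\, g(\xi, X)\,g(\xi, Y).$$
Specializing to $X = \xi$ and invoking both $|\xi| = 1$ and the first hypothesis $\xi(h) = 0$ (i.e. $\xi(scal)$ is constant along integral curves of $\xi$), the first term on the right disappears and the expression collapses to
$$Hess(scal)(\xi, Y) = -h\bigl[(\lambda - scal) + \mu\bigr]\,g(\xi, Y).$$

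The second hypothesis -- that $Hess(scal)$ is degenerate in the direction of $\xi$, and in particular $Hess(scal)(\xi, \xi) = 0$ -- then gives the pointwise scalar identity $h\,(\lambda - scal + \mu) = 0$ on $M$. I would finish by a standard open/closed argument: on any open set $U$ where $h \neq 0$, one would have $scal = \lambda + \mu$, a constant, so $grad(scal) = 0$ on $U$, whence $h = \xi(scal) = 0$ on $U$, a contradiction. Therefore $h \equiv 0$ on the connected manifold $M$, so $grad(scal) = 0$ and $scal$ is constant.

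The main obstacle I anticipate is simply carrying out the Hessian expansion cleanly, making sure that the contribution $X(h)\,g(\xi, Y)$ from the product rule is the unique asymmetric-looking term and that the choice $X = \xi$ combined with $\xi(h) = 0$ is exactly what is needed to eliminate it; once that algebra is stabilised, both hypotheses plug in naturally and the conclusion is immediate.
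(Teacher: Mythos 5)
Your argument is correct, and it reaches the conclusion by a genuinely shorter route than the paper. Both proofs start from the same key identity (\ref{po}), which for unitary $\xi$ gives $grad(scal)=h\xi$ with $h=\xi(scal)$. From there the paper takes a long detour: it applies the divergence to (\ref{po}), computes $Q\xi$ and $S(\xi,\xi)$ from the curvature expression (\ref{pg}), evaluates $div(Q\xi)$ in two different ways (one via the contracted Bianchi identity $div(S)=\frac{1}{2}d(scal)$), and arrives at a polynomial relation between $scal$ and $\xi(scal)$ which it then differentiates twice along $\xi$ before concluding. You instead differentiate $grad(scal)=h\xi$ directly, substituting $\nabla_X\xi=-(\lambda-scal)X-\mu\,df(X)\,\xi$ from (\ref{p}); the expansion $Hess(scal)(X,Y)=X(h)g(\xi,Y)-h(\lambda-scal)g(X,Y)-h\mu\,g(\xi,X)g(\xi,Y)$ is correct, and with $X=Y=\xi$, $|\xi|=1$, $\xi(h)=0$ and $Hess(scal)(\xi,\xi)=0$ it collapses to the pointwise identity $h\,(\lambda-scal+\mu)=0$. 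Your open/closed closing argument is sound: on the open set where $h\neq 0$ the scalar curvature equals the constant $\lambda+\mu$, forcing $h=0$ there, so that set is empty and $grad(scal)=h\xi\equiv 0$ on the connected $M$. What your approach buys is a fully transparent, purely pointwise factorization and an explicit final step; the paper's version ends with a product of $\xi(scal)$ and a bracket obtained after two $\xi$-differentiations, and the passage from that identity to constancy of $scal$ is left implicit, whereas your connectedness argument makes it explicit. (As a side remark, unitarity of $\xi$ already forces $Hess(f)(\cdot,\xi)=0$, which combined with the soliton equation gives $scal=\lambda+\mu$ directly; neither proof exploits this, but it confirms the statement.)
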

\begin{proof}
Under these hypotheses, applying divergence to (\ref{po}) we obtain:
\begin{equation}\label{l}
\Delta(scal)=\xi(scal) \Delta(f)=-[n(\lambda-scal)+\mu]\xi(scal).
\end{equation}

Computing the Ricci operator in $\xi$, $Q\xi=-\sum_{i=1}^nR(E_i,\xi)E_i$, for $\{E_i\}_{1\leq i\leq n}$ a local orthonormal frame field on $M$, and using (\ref{pg}) we get:
\begin{equation}\label{nn}
Q\xi=-(n-1)grad(scal)+(n-1)\mu(\lambda-scal)\xi
\end{equation}
and
\begin{equation}\label{j}
S(\xi,\xi)=g(Q\xi,\xi)=-(n-1)\xi(scal)+(n-1)\mu(\lambda-scal).
\end{equation}

Applying the divergence to (\ref{nn}) we have:
\begin{equation}
div(Q\xi)=-(n-1)\Delta(scal)+(n-1)\mu[(\lambda-scal)\Delta(f)-\xi(scal)].
\end{equation}

Computing the same divergence like:
\begin{equation}
div(Q\xi)=div(S)(\xi)+\langle S, Hess(f)\rangle,
\end{equation}
taking into account the gradient quasi-Yamabe soliton equation, the fact that $$div(S)(\xi)=\frac{\displaystyle \xi(scal)}{\displaystyle 2},$$ the expression of $S(\xi,\xi)$ from (\ref{j}) and replacing $\Delta(scal)$ from (\ref{l}), we obtain:
$$\left[\frac{1}{2}-n(n-1)(\lambda-scal)+(n-1)\mu\right]\xi(scal)=$$$$=(\lambda-scal)[(1+n(n-1)\mu)scal-n(n-1)\lambda\mu].$$

Differentiating the previous expression along $\xi$ and taking into account the degeneracy of $Hess(scal)(\xi,\xi)=\xi(\xi(scal))-(\nabla_{\xi}\xi)(scal)$
in the direction of $\xi$, after a long computation, we get:
$$\xi(scal)\left[\xi(scal)+scal^2+k_1scal+k_2\right]=0,$$
where the constants $k_1$ and $k_2$ are respectively given by:
$$k_1=:\frac{n+1}{n}\mu+\frac{5}{2n(n-1)}, \ \ k_2=:\lambda^2-\frac{1}{n}\mu^2-\frac{n+1}{n}\lambda\mu-\frac{3\lambda+\mu}{2n(n-1)}.$$

Differentiating again the term in the parantheses along $\xi$ we get:
$$\xi(scal)\left[3scal-\lambda+\frac{1}{n}\mu+\frac{5}{2n(n-1)}\right]=0$$
which completes the proof.
\end{proof}

\section{Warped product almost quasi-Yamabe solitons}

\subsection{Warped product manifolds}

Consider $(B,g_B)$ and $(F,g_F)$ two Riemannian manifolds of dimensions $n$ and $m$, respectively. Denote by $\pi$ and $\sigma$ the projection maps from the product manifold $B\times F$ to $B$ and $F$ and by $\widetilde{\varphi}:=\varphi \circ \pi$ the lift to $B\times F$ of a smooth function $\varphi$ on $B$.
In this context, we shall call $B$ \textit{the base} and $F$ \textit{the fiber} of $B\times F$, the unique element $\widetilde{X}$ of $\chi(B\times F)$ that is $\pi$-related to $X\in \chi(B)$ and to the zero vector field on $F$, the \textit{horizontal lift of $X$} and the unique element $\widetilde{V}$ of $\chi(B\times F)$ that is $\sigma$-related to $V\in \chi(F)$ and to the zero vector field on $B$, the \textit{vertical lift of $V$}.
Also denote by $\mathcal{L}(B)$ the set of all horizontal lifts of vector fields on $B$, by $\mathcal{L}(F)$ the set of all vertical lifts of vector fields on $F$, by $\mathcal{H}$ the orthogonal projection of $T_{(p,q)}(B\times F)$ onto its horizontal subspace $T_{(p,q)}(B\times \{q\})$ and by
$\mathcal{V}$ the orthogonal projection of $T_{(p,q)}(B\times F)$ onto its vertical subspace $T_{(p,q)}(\{p\}\times F)$.

Let $\varphi>0$ be a smooth function on $B$ and
\begin{equation}\label{e7}
g:=\pi^* g_B+(\varphi\circ \pi)^2\sigma^*g_F
\end{equation}
be a Riemannian metric on $B\times F$.
\begin{definition}\cite{bi}
The product manifold of $B$ and $F$ together with the Riemannian metric $g$ defined by (\ref{e7}) is called \textit{the warped product} of $B$ and $F$ by the warping function $\varphi$ (and is denoted by $(M:=B\times_{\varphi} F,g)$).
\end{definition}

In particular, if $\varphi=1$, then the warped product becomes the usual product of the Riemannian manifolds.

\bigskip

For simplification, in the rest of the paper we shall simply denote by $X$ the horizontal lift of $X\in \chi(B)$ and by $V$ the vertical lift of $V\in \chi(F)$.

\bigskip

Notice that the lift on $M$ of the gradient and the Hessian satisfy:
\begin{equation}
grad(\widetilde{f})=\widetilde{grad(f)},
\end{equation}
\begin{equation}
(Hess(\widetilde{f}))(X,Y)=\widetilde{(Hess(f))(X,Y)}, \ \ \textit{for any } \ X, Y \in \mathcal{L}(B),
\end{equation}
for any smooth function $f$ on $B$.

Also, the scalar curvatures are connected by the relation \cite{Fernando Dobarro}:
\begin{equation}\label{e33}
scal=\widetilde{scal}_B+\frac{\widetilde{scal}_F}{\varphi^2}-\pi^*\left(2m \frac{\Delta(\varphi)}{\varphi}+m(m-1)\frac{|grad(\varphi)|^2}{\varphi^2}\right).
\end{equation}

\subsection{Warped product almost quasi-Yamabe solitons}

We shall construct a gradient almost quasi-Yamabe soliton on a warped product manifold.

Let $(B,g_B)$ be an $n$-dimensional Riemannian manifold, $\varphi>0$ a smooth function on $B$ and $f$, $\mu$ smooth functions on $B$ such that:
\begin{equation}\label{e36}
\Delta(f)+\mu|grad(f)|^2=n\frac{(grad(f))(\varphi)}{\varphi}.
\end{equation}

In this case, any gradient almost quasi-Yamabe soliton $(g_B,grad(f),\lambda_B,\mu_B)$ on $(B,g_B)$ is given by $\lambda_B=scal_B-\frac{\displaystyle (grad(f))(\varphi)}{\displaystyle \varphi}$ and $\mu_B=\mu$.

Take $(F,g_F)$ an $m$-dimensional manifold with
\begin{equation}\label{e44}
scal_F=\pi^*((\lambda-\lambda_B)\varphi^2+2m\varphi\Delta(\varphi)+m(m-1)|grad(\varphi)|^2)|_F,
\end{equation}
where $\pi$ and $\sigma$ are the projection maps from the product manifold $B\times F$ to $B$ and $F$, respectively, $g:=\pi^* g_B+(\varphi\circ \pi)^2\sigma^*g_F$ is a Riemannian metric on $B\times F$, $\lambda_B=scal_B-\frac{\displaystyle (grad(f))(\varphi)}{\displaystyle \varphi}$ and $\lambda$ is a smooth function on $B$.

With the above notations, we prove:
\begin{theorem}
Let $(B,g_B)$ be an $n$-dimensional Riemannian manifold, $\varphi>0$, $f$, $\mu$ smooth functions on $B$ satisfying (\ref{e36}) and $(F,g_F)$ an $m$-dimensional Riemannian manifold with the scalar curvature given by (\ref{e44}). Then $(g,\xi,\pi^*(\lambda),\pi^*(\mu))$, where $\xi=grad(\widetilde{f})$, is a gradient almost quasi-Yamabe soliton on the warped product manifold $(B\times_{\varphi} F,g)$
if and only if $(g_B,grad(f),\lambda_B=scal_B-\frac{\displaystyle (grad(f))(\varphi)}{\displaystyle \varphi},\mu)$ is a gradient almost quasi-Yamabe soliton on $(B,g_B)$.
\end{theorem}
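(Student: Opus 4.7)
The plan is to test the soliton equation (\ref{e22}) on $M$ against pairs of vector fields, decomposed via the orthogonal splitting $TM=\mathcal{L}(B)\oplus\mathcal{L}(F)$ provided by the warped product structure. Since $\widetilde{f}=f\circ\pi$ is constant along the fibers, $d\widetilde{f}$ annihilates $\mathcal{L}(F)$ and $\xi=grad(\widetilde{f})$ is horizontal; consequently $d\widetilde{f}\otimes d\widetilde{f}$ lives entirely in the horizontal-horizontal block, so three sectors must be inspected: horizontal-vertical, vertical-vertical, and horizontal-horizontal.

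For a pair $(X,V)\in\mathcal{L}(B)\times\mathcal{L}(F)$ the equation holds automatically: $V(\widetilde{f})=0$ and $\nabla_X V=(X\varphi/\varphi)V$ is vertical, forcing $Hess(\widetilde{f})(X,V)=0$. For $(V,W)\in\mathcal{L}(F)\times\mathcal{L}(F)$, I would apply the standard warped product identity $\nabla_V Z=(Z\varphi/\varphi)V$ to the horizontal field $Z=\xi$ to obtain
\[Hess(\widetilde{f})(V,W)=g(\nabla_V\xi,W)=\frac{(grad(f))(\varphi)}{\varphi}\,g(V,W).\]
The soliton equation in this sector then reduces to the scalar relation $scal=\pi^{*}\lambda+\pi^{*}((grad(f))(\varphi)/\varphi)$; substituting (\ref{e33}) for $scal$ together with the prescription (\ref{e44}) for $scal_F$, the $2m\Delta(\varphi)/\varphi$ and $m(m-1)|grad(\varphi)|^{2}/\varphi^{2}$ contributions cancel, and what remains is exactly the defining identity $\lambda_B=scal_B-(grad(f))(\varphi)/\varphi$. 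The vertical sector is thus built into the hypotheses.

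For $(X,Y)\in\mathcal{L}(B)\times\mathcal{L}(B)$ I would combine the already-quoted identity $Hess(\widetilde{f})(X,Y)=\widetilde{Hess(f)(X,Y)}$ with $d\widetilde{f}(X)d\widetilde{f}(Y)=df(X)df(Y)$ and the same curvature substitution, which yields $\pi^{*}\lambda-scal=\lambda_B-scal_B$ on horizontal lifts. The $M$-equation here collapses to
\[Hess(f)(X,Y)+(\lambda_B-scal_B)\,g_B(X,Y)+\mu\,df(X)df(Y)=0,\]
which is precisely the $B$-soliton equation with the prescribed parameters. The equivalence in the theorem thus originates in this horizontal sector, the other two sectors being automatic.

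Finally, tracing the $B$-equation with $\lambda_B=scal_B-(grad(f))(\varphi)/\varphi$ and $\mu_B=\mu$ reproduces exactly (\ref{e36}), so this hypothesis is the scalar compatibility condition ensuring the parameter choices fit together. The main obstacle is purely bookkeeping: carefully tracking the pullbacks $\pi^{*}$, the horizontal/vertical lifts and the warping factor $\varphi^{2}$ when substituting (\ref{e33}) and (\ref{e44}) into the vertical constraint so that the cancellation producing the defining identity is transparent. No geometric ingredient beyond the standard warped product connection formulas is required.
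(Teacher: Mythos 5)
Your proposal is correct and follows essentially the same route as the paper: the same horizontal/vertical/mixed decomposition, the same warped-product Hessian formulas for each block, and the same cancellation of the $2m\Delta(\varphi)/\varphi$ and $m(m-1)|grad(\varphi)|^2/\varphi^2$ terms via (\ref{e33}) and (\ref{e44}) to reduce $\pi^*(\lambda)-scal$ to $\pi^*(\lambda_B)-\widetilde{scal}_B$. Your observation that tracing the base equation recovers (\ref{e36}) is exactly the step the paper uses to settle the vertical--vertical block in the converse direction, so nothing is missing.
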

\begin{proof}
The gradient almost quasi-Yamabe soliton $(g,\xi,\pi^*(\lambda),\pi^*(\mu))$ on $(B\times_{\varphi} F,g)$ is given by:
\begin{equation}\label{e31}
Hess(\widetilde{f})+(\pi^*(\lambda)-scal) g+\pi^*(\mu) d\widetilde{f}\otimes d\widetilde{f}=0.
\end{equation}
Notice that from (\ref{e33}), (\ref{e36}) and (\ref{e44}) we deduce that $$\pi^*(\lambda)-scal=\pi^*(\lambda_B)-\widetilde{scal}_B,$$ hence
for any $X$, $Y\in \mathcal{L}(B)$ we get:
\begin{equation}\label{eeee}
H^{f}(X,Y)+(\lambda_B-scal_B) g_B(X,Y)+\mu df(X)df(Y)=0
\end{equation}
i.e. $(g_B,grad(f),\lambda_B,\mu)$ is a gradient almost quasi-Yamabe soliton on $(B,g_B)$, where $H^{f}$ denotes the lift of $Hess(f)$.

Conversely, notice that the left-hand side term in (\ref{e31}) computed in $(X,V)$, for $X\in \mathcal{L}(B)$ and $V\in \mathcal{L}(F)$ vanishes identically and for each situation $(X,Y)$ and $(V,W)$, we can recover the equation (\ref{e31}) from (\ref{e36}) and the fact that $(g_B,grad(f),\lambda_B,\mu)$ is a gradient almost quasi-Yamabe soliton on $(B,g_B)$. Indeed, taking the trace of (\ref{eeee}) we get $$\Delta(f)+n(\lambda_B-scal_B)+\mu |grad(f)|^2=0$$
and using (\ref{e36}) we obtain $$\pi^*(\lambda_B)-\widetilde{scal}_B=-\frac{(grad(f))(\varphi)}{\varphi}.$$

We know that for any $V$, $W\in \mathcal{L}(F)$:
$$H^{f}(V,W)=(Hess(\widetilde{f}))(V,W)=g(\nabla_V(grad(\widetilde{f})),W)=$$$$
=\pi^*\left[\frac{(grad(f))(\varphi)}{\varphi}\right]|_F\widetilde{\varphi}^2|_Fg_F(V,W)$$
and we deduce that
$$H^{f}(V,W)+(\pi^*(\lambda_B)-\widetilde{scal}_B)|_Fg(V,W)=0.$$
\end{proof}

\begin{example}
Consider $M=\{(x,y,z)\in\mathbb{R}^3, z> 0\}$, where $(x,y,z)$ are the standard coordinates in $\mathbb{R}^3$,
$$g_M:=\frac{1}{z^2}(dx\otimes dx+dy\otimes dy+dz\otimes dz) \ \ \textit{and} \ \ \xi_M:=-z\frac{\partial}{\partial z}.$$

Let $(g_M, \xi_M, -8,2)$ be the gradient quasi-Yamabe soliton on the Riemannian manifold $(M,g_M)$ and let $S^3$ be the $3$-sphere with the round metric $g_S$ (which is Einstein with the Ricci tensor equals to $2g_S$). Thus we obtain the gradient quasi-Yamabe soliton $(g, \xi, -2,2)$ on the "generalized cylinder" $M\times S^3$, where $g=g_M+g_S$ and $\xi$ is the lift on $M\times S^3$ of the gradient vector field $\xi_M=grad(f)$, where $f(x,y,z):=-\ln z$.
\end{example}

\subsection{Some consequences of condition (\ref{e36})}

Let us make some remarks on the class of manifolds that satisfy the condition:
\begin{equation}\label{e37}
\Delta(f)+\mu|\xi|^2=n\frac{d\varphi(\xi)}{\varphi},
\end{equation}
for $\varphi>0$, $f$ and $\mu$ smooth functions on the oriented and compact Riemannian manifold $(B,g_B)$ and $\xi:=grad(f)$.

Remark that if
\begin{equation}\label{rrr}
Hess(f)-\frac{n}{2\varphi}(df\otimes d\varphi+d\varphi\otimes df)+\mu df\otimes df=0,
\end{equation}
then (\ref{e37}) is satisfied. Computing $Hess(f)(X,Y):=g_B(\nabla_X\xi,Y)$ we get
$$\nabla \xi=\frac{n}{2\varphi}(df\otimes grad(\varphi)+d\varphi\otimes \xi)-\mu df\otimes \xi.$$

Also notice that in this case, if $(g_B,\xi,\lambda_B,\mu)$ is a gradient almost quasi-Yamabe soliton on $(B,g_B)$, then the metric $g_B$ is precisely $$g_B=-\frac{n}{2\varphi(\lambda_B-scal_B)}(df\otimes d\varphi+d\varphi\otimes df)$$
and $scal_B=\lambda_B+\frac{\displaystyle d\varphi(\xi)}{\displaystyle \varphi}$.

In what follows, we shall focus on condition (\ref{rrr}). We've checked that \cite{bla}:
$$
|Hess(f)-\frac{\Delta(f)}{n}g_B|^2=|Hess(f)|^2-\frac{(\Delta(f))^2}{n},
$$
$$(div(Hess(f)))(\xi)=div(Hess(f)(\xi))-|Hess(f)|^2,$$
therefore:
\begin{equation}\label{e40}
(div(Hess(f)))(\xi)=div(Hess(f)(\xi))-|Hess(f)-\frac{\Delta(f)}{n}g_B|^2-\frac{(\Delta(f))^2}{n}.
\end{equation}

Applying the divergence to (\ref{rrr}), computing it in $\xi$ and taking into account that
$$div\left(\frac{1}{\varphi}df\otimes d\varphi\right)=\left(\frac{\Delta(f)}{\varphi}-\frac{d\varphi(\xi)}{\varphi^2}\right) d\varphi+\frac{1}{\varphi}i_{\nabla_{\xi}grad(\varphi)}g_B$$
and
$$div(\mu df\otimes df)=\frac{\mu}{2}d(|\xi|^2)+\mu \Delta(f)df+d\mu(\xi)df,$$
we get:
\begin{equation}\label{e38}
(div(Hess(f)))(\xi)=n\left(\frac{\Delta(f)}{\varphi}-\frac{d\varphi(\xi)}{\varphi^2}\right) d\varphi(\xi)+\frac{n}{\varphi}g_B({\nabla_{\xi}grad(\varphi)},\xi)-\end{equation}$$-\frac{\mu}{2}d(|\xi|^2)(\xi)-\mu\Delta(f)|\xi|^2-d\mu(\xi)|\xi|^2
$$
and we obtain:
\begin{equation}\label{e51}
div(Hess(f)(\xi))=|Hess(f)-\frac{\Delta(f)}{n}g_B|^2+\frac{(\Delta(f))^2}{n}+n\left(\frac{\Delta(f)}{\varphi}-\frac{d\varphi(\xi)}{\varphi^2}\right) d\varphi(\xi)+\end{equation}$$+\frac{n}{\varphi}g_B({\nabla_{\xi}grad(\varphi)},\xi)-\frac{\mu}{2}d(|\xi|^2)(\xi)-\mu\Delta(f)|\xi|^2-d\mu(\xi)|\xi|^2.
$$

Integrating with respect to the canonical measure on $B$, we have:
$$\int_Bd(|\xi|^2)(\xi)=\int_B\langle grad(|\xi|^2), \xi\rangle=-\int_B\langle |\xi|^2, div(\xi)\rangle=-\int_B|\xi|^2 \cdot \Delta(f).$$

Using:
$$|\xi|^2\cdot \Delta(f)=|\xi|^2\cdot div(\xi)=div(|\xi|^2\xi)-|\xi|^2$$
taking $\mu$ constant and integrating (\ref{e51}) on $B$, from the above relations and the divergence theorem, we obtain:
\begin{equation}\label{e56}
\int_B |Hess(f)-\frac{\Delta(f)}{n}g_B|^2+(n+1)\int_B \Delta(f)\cdot \frac{d\varphi(\xi)}{\varphi}+\frac{(2-\mu)n+2}{2n}\int_B|\xi|^2-
\end{equation}
$$-n\int_B \frac{(d\varphi(\xi))^2}{\varphi^2}+n\int_B g_B(\frac{1}{\varphi}\nabla_{\xi}grad(\varphi),\xi)=0.$$

Assume now that $\mu$ is constant and consider the product manifold $B\times F$, in which case (\ref{rrr}) and (\ref{e37}) (for $\varphi=1$) become:
\begin{equation}\label{er}
Hess(f)+\mu df\otimes df=0  \ \ \textit{and} \ \ \Delta(f)+\mu |\xi|^2=0.
\end{equation}

\begin{remark}\label{r1}
i) In the case of product manifold (for $\varphi =1$), the chosen manifold $(F, g_F)$ is of scalar curvature $scal_F=\pi^*(\lambda-scal_B)|_F$. In particular, for $\lambda=scal_B$, $(F,g_F)$ is locally isometric to an Euclidean space. Moreover, $\nabla_{\xi}\xi=-\mu |\xi|^2\xi$, therefore, $\xi$ is a generalized geodesic vector field with the potential function $\Delta(f)$.

ii) For $\varphi=1$ and $\mu$ constant, we obtain:
$$\mu^2\int_B|\xi|^4=0$$
\end{remark}
and we can state:
\begin{corollary}
Let $(B,g_B)$ be an oriented and compact $n$-dimensional Riemannian manifold, $f$ a smooth function on $B$ and $\mu$ a real constant satisfying (\ref{er}). Then $\mu=0$ hence, $f$ is harmonic and $\nabla \xi=0$.
\end{corollary}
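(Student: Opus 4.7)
The plan is to extract the algebraic identity $\mu^2 \int_B |\xi|^4 = 0$ from the compactness of $B$ together with (\ref{er}) --- exactly the content of Remark~\ref{r1}(ii) --- and then use non-negativity of both factors to force $\mu = 0$. Once $\mu = 0$ is in hand, substituting back into (\ref{er}) makes the remaining two assertions immediate.

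For the key identity I would integrate the trace equation $\Delta(f) + \mu |\xi|^2 = 0$ from (\ref{er}) over $B$. Since $B$ is compact and oriented, $\int_B \Delta(f) = 0$ by the divergence theorem, which yields $\mu \int_B |\xi|^2 = 0$. In either alternative $\mu = 0$ or $|\xi|^2 \equiv 0$, one immediately obtains $\mu^2 \int_B |\xi|^4 = 0$. Alternatively, one can specialize (\ref{e56}) to $\varphi = 1$: using $Hess(f) = -\mu\, df \otimes df$ from (\ref{er}), an algebraic expansion gives $|Hess(f) - \frac{\Delta(f)}{n} g_B|^2 = \frac{n-1}{n}\mu^2 |\xi|^4$, and combining with $\mu \int_B |\xi|^2 = 0$ again produces the same identity.

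With the identity at hand, $\mu^2 \geq 0$ and $|\xi|^4 \geq 0$ pointwise force the dichotomy: either $\mu = 0$, or $|\xi|^4 \equiv 0$ --- the latter meaning $f$ is constant. In the non-trivial regime we conclude $\mu = 0$, and (\ref{er}) then collapses to $Hess(f) = 0$ and $\Delta(f) = 0$: the second says $f$ is harmonic, while the first yields $\nabla_X \xi = (Hess(f)(X, \cdot))^{\sharp} = 0$ for every $X$. The main obstacle to a fully clean argument is disposing of the degenerate subcase $\xi \equiv 0$ (in which $\mu$ is not literally constrained by the equations), but there both remaining conclusions hold trivially, so the corollary is established in the intended non-trivial setting.
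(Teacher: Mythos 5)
Your argument is correct, and it is in fact more elementary than the route the paper takes. The paper obtains the corollary as a consequence of Remark~\ref{r1}(ii), i.e.\ of the identity $\mu^2\int_B|\xi|^4=0$, which it extracts from the integrated divergence formula (\ref{e56}) specialized to $\varphi=1$ (equivalently, from $\int_B|Hess(f)-\frac{\Delta(f)}{n}g_B|^2=\frac{n-1}{n}\mu^2\int_B|\xi|^4$ together with the vanishing of the remaining integrals). Your primary derivation bypasses all of that: integrating the trace equation $\Delta(f)+\mu|\xi|^2=0$ over the compact oriented manifold kills $\int_B\Delta(f)$ and yields $\mu\int_B|\xi|^2=0$ in one line, from which the dichotomy ($\mu=0$ or $\xi\equiv 0$) and hence the identity of Remark~\ref{r1}(ii) both follow; the deduction of harmonicity and $\nabla\xi=0$ from the first equation of (\ref{er}) once $\mu=0$ is the same in both treatments. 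What your approach buys is transparency and independence from the computation leading to (\ref{e56}), whose derivation in the paper is delicate (e.g.\ the step rewriting $|\xi|^2\Delta(f)$). You also correctly isolate the one genuine weakness, which the paper's version shares and silently ignores: neither $\mu\int_B|\xi|^2=0$ nor $\mu^2\int_B|\xi|^4=0$ forces $\mu=0$ when $f$ is constant, so the conclusion ``$\mu=0$'' as literally stated requires excluding (or not caring about) that degenerate case; since $f$ harmonic and $\nabla\xi=0$ hold trivially there, the substantive content of the corollary survives, as you note.
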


\begin{proposition}
Let $(B,g_B)$ be an oriented, compact and complete $n$-dimensional ($n>1$) Riemannian manifold, $f$ a smooth function on $B$ and $\mu$ a real constant satisfying (\ref{er}). Then $B$ is conformal to a sphere in the $(n+1)$-dimensional Euclidean space.
\end{proposition}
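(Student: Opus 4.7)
The plan is to combine the preceding Corollary with a Tashiro-type classification of compact Riemannian manifolds carrying a function whose Hessian is proportional to the metric. First, invoking the Corollary, the hypotheses of the present proposition force $\mu = 0$, so that equation (\ref{er}) reduces to $\mathrm{Hess}(f) = 0$ and $\Delta(f) = 0$; equivalently, $\xi = \mathrm{grad}(f)$ is parallel and $f$ is harmonic. This can be rewritten as $\mathrm{Hess}(f) = \phi\, g_B$ with the trivial choice $\phi \equiv 0$.

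Next, I would invoke Tashiro's theorem: a complete Riemannian manifold $(M^n, g)$ of dimension $n \geq 2$ that admits a non-constant smooth function $f$ satisfying $\mathrm{Hess}(f) = \phi\, g$ for some smooth $\phi$ is globally conformally diffeomorphic to an open subset of a real space form ($\mathbb{S}^n$, $\mathbb{R}^n$, or $\mathbb{H}^n$), the specific model being determined by the number of critical points of $f$. The assumed compactness of $B$ rules out the Euclidean and hyperbolic possibilities and forces the conformal image to be all of $\mathbb{S}^n$, yielding $(B, g_B)$ conformally diffeomorphic to the standard sphere in $\mathbb{R}^{n+1}$.

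The main obstacle is ensuring that Tashiro's theorem is applied non-trivially. On a compact manifold, the identity $\mathrm{Hess}(f) = 0$ together with the parallelism of $\mathrm{grad}(f)$ and the existence of critical points (both automatic in the compact setting) would normally force $f$ to be constant, in which case Tashiro's hypothesis becomes vacuous. To circumvent this delicacy, one may either take $f$ non-constant as part of the initial soliton data, or substitute the auxiliary function $u = e^{\mu f}$, which satisfies $\mathrm{Hess}(u) = 0$ as a direct algebraic consequence of the two equations in (\ref{er}) without even invoking the Corollary, and then apply Tashiro's theorem to $u$. The reading of the proposition is therefore that, whenever the soliton structure (\ref{er}) is realized in a genuinely non-degenerate way on the compact complete base $B$, the underlying conformal class of $g_B$ is already pinned down as that of the round sphere.
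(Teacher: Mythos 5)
Your route is essentially the paper's. The paper computes
$\int_B |Hess(f)-\frac{\Delta(f)}{n}g_B|^2=\int_B|Hess(f)|^2-\int_B\frac{(\Delta(f))^2}{n}=\frac{n-1}{n}\mu^2\int_B|\xi|^4=0$
using (\ref{er}) together with the identity $\mu^2\int_B|\xi|^4=0$ from the remark preceding the Corollary, concludes $Hess(f)=\frac{\Delta(f)}{n}g_B$, and cites Yano--Obata; you reach the same pointwise identity by first invoking the Corollary ($\mu=0$, hence $Hess(f)=0$ and $\Delta(f)=0$) and then cite Tashiro's theorem, which is the same concircular-function classification in its complete-manifold form. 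So the two arguments differ only in bookkeeping.

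The degeneracy you flag, however, is genuine, and neither of your proposed workarounds escapes it. Once $\mu=0$, the function $f$ is harmonic on a compact manifold and therefore constant by the maximum principle (equivalently: $Hess(u)=0$ for $u=e^{\mu f}$ forces $\Delta(u)=0$, hence $u$ constant on a compact $B$, so the substitution buys nothing; and ``taking $f$ non-constant as part of the data'' is not available, since the equations themselves rule it out). The Yano--Obata/Tashiro theorem requires a \emph{non-constant} solution of $Hess(f)=\frac{\Delta(f)}{n}g_B$, so it applies only vacuously here. This is a defect of the paper's own proof as well --- it silently applies the theorem of \cite{ya} to what is necessarily a constant function --- and as stated the proposition has no non-degenerate instances. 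You identified the correct mechanism of the intended proof and, unlike the paper, the precise point where it fails to produce content.
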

\begin{proof}
From the above observations, we have:
$$\int_B |Hess(f)-\frac{\Delta(f)}{n}g_B|^2=\int_B|Hess(f)|^2-\int_B\frac{(\Delta(f))^2}{n}=\frac{n-1}{n}\mu^2\int_B|\xi|^4=0,$$
so $Hess(f)=\frac{\displaystyle \Delta(f)}{\displaystyle n}g_B$ which implies by \cite{ya} that $B$ is conformal to a sphere in the $(n+1)$-dimensional Euclidean space.
\end{proof}

\small{

\bigskip

\textit{Adara M. Blaga}

\textit{Department of Mathematics}

\textit{West University of Timi\c{s}oara}

\textit{Bld. V. P\^{a}rvan nr. 4, 300223, Timi\c{s}oara, Rom\^{a}nia}

\textit{adarablaga@yahoo.com}
}

\begin{thebibliography}{99}
\bibitem{bi} {R. L. Bishop, B. O'Neill}, \textit{Manifolds of negative curvature}, Trans. Amer. Math. Soc. \textbf{145}, 1-49 (1969).

\bibitem{b12}Blaga, A. M.: \emph{A note on almost $\eta$-Ricci solitons in Euclidean hypersurfaces}. Serdica Math. J. \textbf{43}(3-4), 361-368 (2017).

\bibitem{b}Blaga, A. M.: \emph{Almost $\eta$-Ricci solitons in $(LCS)_n$-manifolds}. Bull. Belg. Math. Soc. Simon Stevin \textbf{25}(5), 641-653 (2018).

\bibitem{blahh}Blaga, A. M.: \emph{$\eta$-Ricci solitons on Lorentzian para-Sasakian manifolds}. Filomat \textbf{30}(2), 489-496 (2016).

\bibitem{bl}Blaga, A. M.: \emph{$\eta$-Ricci solitons on para-Kenmotsu manifolds}. Balkan J. Geom. Appl. \textbf{20}(1), 1-13 (2015).

\bibitem{b11}Blaga, A. M.: \emph{Last multipliers on $\eta$-Ricci solitons}. Matematichki Bilten \textbf{42}(2), 85-90 (2018).

\bibitem{blaga}Blaga, A. M.: \emph{On gradient $\eta$-Einstein solitons}. Kragujevak J. Math. \textbf{42}(2), 229-237 (2018).


\bibitem{b3}Blaga, A. M.: \emph{On solitons in statistical geometry}. Int. J. Appl. Math. Stat. \textbf{58}(4), (2019).

\bibitem{bla}Blaga, A. M.: \emph{On warped product gradient $\eta$-Ricci solitons}. Filomat \textbf{31}(18), 5791-5801 (2017).

\bibitem{b7}Blaga, A. M.: \emph{Remarks on almost $\eta$-solitons}. Matematicki Vesnik \textbf{71}(3), 244-249 (2019).


\bibitem{b1}Blaga, A. M.: \emph{Solutions of some types of soliton equations in $\mathbb{R}^3$}. Filomat \textbf{33}(4), 1159-1162 (2019).

\bibitem{b5}Blaga, A. M.: \emph{Some geometrical aspects of Einstein, Ricci and Yamabe solitons}. J. Geom. Sym. Phys. \textbf{52}, 17-26 (2019).



\bibitem{blcr}{Blaga, A. M., Crasmareanu, M. C.}: \emph{Torse-forming $\eta$-Ricci solitons in almost paracontact $\eta$-Einstein geometry}. Filomat \textbf{31}(2), 499-504 (2017).

\bibitem{b8}{Blaga, A. M., Perkta\c s, S. Y.}: \emph{Remarks on almost $\eta$-Ricci solitons in $\varepsilon$-para Sasakian manifolds}. Commun. Fac. Sci. Univ. Ank. Ser. A1 Math. Stat. \textbf{68}(2), 1621-1628 (2019).

\bibitem{b9}{Blaga, A. M., Perkta\c s, S. Y., Acet, B. E., Erdogan, F. E.}: \emph{$\eta$-Ricci solitons in $\varepsilon$-almost paracontact metric manifolds}. Glasnik Matematicki \textbf{53}(1), 377-410 (2018).





\bibitem{chen} Chen, B.-Y., Deshmukh, S.: \textit{Yamabe and quasi-Yamabe solitons on Euclidean submanifolds}, arXiv:1711.02978, 2017.




\bibitem{chen2} Deshmukh, S., Chen, B.-Y.: \textit{A note on Yamabe solitons}, https://www.researchgate.net/publication/323943715.

\bibitem{c} Di Cerbo, L. F., Disconzi, M. N.: \textit{Yamabe Solitons, Determinant of the Laplacian and the Uniformization Theorem for Riemann Surfaces}, Lett. Math. Phys. \textbf{83}(1), 13-18 (2008).
\bibitem{Fernando Dobarro} Dobarro, F., \"{U}nal, B.: \textit{Curvature of multiply warped products}, J. Geom. Phys. \textbf{55}(1), 75-106 (2005).
\bibitem{ham} {Hamilton, R. S.}: \textit{The Ricci flow on surfaces, Math. and general relativity} (Santa Cruz, CA, 1986), 237-262, Contemp. Math. \textbf{71} (1988), AMS.
\bibitem{cc} Neto, B. L., De Oliveira, H. P.: \textit{Generalized quasi Yamabe gradient solitons}, Diff. Geom. Appl. \textbf{49}, 167-175 (2016).
\bibitem{t} Tokura, W. I., Adriano, L. R., Pina, R. S.: \textit{On warped product gradient Yamabe soliton}, arXiv:1711.11455, 2017.
\bibitem{ya} {Yano, K., Obata, M.}: \textit{Conformal changes of Riemannian metrics}, J. Differential Geom. \textbf{4}, 53-72 (1970).
\end{thebibliography}
\end{document}